\documentclass[12pt]{article}
\usepackage{amsmath,amssymb,amsthm}
\usepackage{geometry}
\usepackage{tikz}
\usepackage{setspace}
\newtheorem{theorem}{Theorem}

\newtheorem{definition}[theorem]{Definition}
\newtheorem{example}[theorem]{Example}

\newtheorem{proposition}[theorem]{Proposition}
\newtheorem{remark}[theorem]{Remark}

\DeclareMathOperator{\Range}{Range}
\DeclareMathOperator{\Ran}{Ran}
\DeclareMathOperator{\Ker}{Ker}
\DeclareMathOperator{\Span}{span}

\title{Finite-Approximate Solvability of Linear Operator Equations}
\author{Nazim I. Mahmudov \\
	Department of Mathematics \\
	Eastern Mediterranean University \\
	Famagusta, T.R. Northern Cyprus, via Mersin 10, Turkey \\
	Email: nazim.mahmudov@emu.edu.tr
}

\date{}

\begin{document}
	\maketitle
	
	\begin{abstract}
		We introduce and study the finite-approximate solvability of operator equations \(Lu = h\) in a Hilbert space setting, where a bounded operator \(L \colon U \to H\) is paired with a finite-dimensional constraint operator \(\pi \colon H \to H_0\). The objective is to match exactly the prescribed component \(\pi h\) while approximating the remainder. We prove that the problem of finding \(u\) such that \(\|Lu - h\| < \varepsilon\) and \(\pi(Lu) = \pi h\) is solvable for all \(\varepsilon > 0\) if and only if \(\alpha T_\alpha^{-1}h \to 0\) as \(\alpha \to 0^+\). We further show that dropping any of the structural assumptions on \(L\), \(\Gamma\), or \(\pi\) leads to a failure of the equivalence. When \(\pi \colon H \to H_0\) has an infinite-dimensional range that is compactly embedded in \(H\), the operator \(T_\alpha\) may no longer be invertible. However, a Galerkin scheme \(\pi_n \to \pi\) recovers approximate solvability through the resolvents \((\alpha(I - \pi_n) + \Gamma)^{-1}\).
	\end{abstract}
	
	\section{Introduction}
	
	Let \(H\) and \(U\) be Hilbert spaces, and let \(L \colon U \to H\) be a bounded linear operator. Let \(H_0 \subset H\) be a finite-dimensional subspace, and let \(\pi \colon H \to H_0\) denote the orthogonal projection onto \(H_0\). Consider the operator equation
	\[
	Lu = h, \quad h \in H.
	\]
	We are interested in the notion of \emph{finite-approximate solvability} with respect to \(H_0\), which requires that for any \(\varepsilon > 0\), there exists \(u_\varepsilon \in U\) such that
	\[
	\|Lu_\varepsilon - h\| < \varepsilon \quad \text{and} \quad \pi(Lu_\varepsilon - h) = 0.
	\]
	In other words, one can \emph{exactly match} the component of \(h\) in \(H_0\) while \emph{approximating} the remaining component in \(H_0^\perp\).
	
	Decomposing \(h = h_0 + h_\perp\), where \(h_0 = \pi_0 h \in H_0\) and \(h_\perp = (I - \pi_0)h \in H_0^\perp\), the equation \(Lu = h\) is finite-approximately solvable if and only if
	\[
	h_0 \in \Range(\pi_0 L) \quad \text{and} \quad h_\perp \in \overline{\Range(L)}.
	\]
	That is, the component \(h_0\) in the finite-dimensional subspace must be \emph{exactly reachable}, while the component \(h_\perp\) in the orthogonal complement must be \emph{approximately reachable}.
	
	In this work, we characterize this property via the behavior, as \(\alpha \to 0^+\), of the regularized resolvent
	\[
	(\alpha(I - \pi) + \Gamma)^{-1}, \quad \text{where } \Gamma = LL^*.
	\]
	We show that finite-approximate solvability is equivalent to the condition
	\[
	\lim_{\alpha \to 0^+} \alpha(\alpha(I - \pi) + \Gamma)^{-1}h = 0.
	\]
	We also demonstrate that the structural assumptions on \(L\), \(\Gamma\), and \(\pi\) are essential, and provide counterexamples when any of these is relaxed. Finally, we extend the analysis to the case where \(\pi\) has infinite-dimensional range via a Galerkin approximation scheme.
	
	Our work connects to several areas in functional analysis and inverse problems, including regularization theory for ill-posed problems \cite{tikhonov1963solution, engl1996regularization}, operator approximation methods \cite{kress1999linear, nair2009}, and constrained optimization in Hilbert spaces \cite{luenberger1969optimization}. The Galerkin approach we develop relates to projection methods for operator equations \cite{bakushinskii2011iterative, kirsch2011introduction}.
	
	\section{Finite-Approximate Solvability of Operator Equations}
	
	Let \(H\) and \(U\) be Hilbert spaces, and let \(L:U \to H\) be a bounded linear operator. Let \(H_0 \subset H\) be a finite-dimensional subspace, and let \(\pi: H \to H_0\) denote the orthogonal projection onto \(H_0\). Consider the operator equation
	\[
	L u = h, \quad h \in H.
	\]
	
	\begin{definition}
		The equation \(L u = h\) is said to be \emph{finite-approximately solvable with respect to \(H_0\)} if, for any \(\varepsilon > 0\), there exists \(u_\varepsilon \in U\) such that
		\[
		\|L u_\varepsilon - h\| < \varepsilon \quad \text{and} \quad \pi (L u_\varepsilon - h) = 0.
		\]
	\end{definition}
	
	In other words, one can \emph{exactly match} the component of \(h\) in \(H_0\) while \emph{approximating} the remaining component in \(H_0^\perp\).
	
	Decompose \(h = h_0 + h_\perp\), where \(h_0 := \pi h \in H_0\) and \(h_\perp := (I-\pi)h \in H_0^\perp\). Then \(L u = h\) is finite-approximately solvable if and only if
	\[
	h_0 \in \Range(\pi L) \quad \text{and} \quad h_\perp \in \overline{\Range(L)}.
	\]
	That is, the component \(h_0\) in the finite-dimensional subspace must be \emph{exactly reachable}, while the component \(h_\perp\) in the orthogonal complement must be \emph{approximately reachable}.
	
	\begin{proposition} \label{prop:1}
		Let \(H\) and \(U\) be Hilbert spaces, let \(L: U \to H\) be a bounded linear operator, set \(\Gamma := L L^*\), and assume \(\Gamma\) is strictly positive. Let \(\pi: H \to H_0\) be an orthogonal projector onto a finite-dimensional subspace \(H_0 \subset H\). Then for any \(\alpha > 0\), the operator
		\[
		T_\alpha := \alpha(I-\pi) + \Gamma
		\]
		is invertible.
	\end{proposition}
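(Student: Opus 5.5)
The plan is to write \(T_\alpha\) as a finite-rank perturbation of an operator that is obviously invertible. Then it suffices to prove injectivity, because the Fredholm alternative turns injectivity into invertibility. Concretely, I would write
\[
T_\alpha = (\alpha I + \Gamma) - \alpha \pi .
\]
Since \(\Gamma = LL^*\) is bounded, self-adjoint and nonnegative, we have \(\langle (\alpha I+\Gamma)h,h\rangle \ge \alpha\|h\|^2\). By Lax--Milgram (or simply the spectral theorem), \(\alpha I + \Gamma\) is therefore boundedly invertible. The term \(\alpha\pi\) has finite rank because \(H_0\) is finite-dimensional, so it is compact.

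First step: factor
\[
T_\alpha = (\alpha I+\Gamma)\bigl(I - \alpha(\alpha I+\Gamma)^{-1}\pi\bigr).
\]
The operator \(K := \alpha(\alpha I+\Gamma)^{-1}\pi\) is compact, indeed of finite rank. By the Riesz--Schauder theory (Fredholm alternative), \(I-K\) is invertible as soon as it is injective, and hence so is \(T_\alpha\). The bounded inverse then comes for free, via the open mapping theorem or directly from the Fredholm theory.

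Second step: prove that \(T_\alpha\) is injective. Suppose \(T_\alpha h = 0\) and take the inner product with \(h\):
\[
0 = \langle T_\alpha h, h\rangle = \alpha\|(I-\pi)h\|^2 + \langle \Gamma h,h\rangle = \alpha\|(I-\pi)h\|^2 + \|L^*h\|^2 .
\]
Both terms are nonnegative, so \(\langle \Gamma h,h\rangle = 0\). Strict positivity of \(\Gamma\) means \(\langle\Gamma h,h\rangle>0\) for \(h\neq 0\), so \(h=0\).

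The main point requiring care is surjectivity. Strict positivity of \(\Gamma\) does not mean coercivity, and \(T_\alpha\) is not coercive on \(H_0\). So one cannot conclude directly via Lax--Milgram, and a self-adjoint injective operator need only have dense range, not closed range. This is exactly where finite-dimensionality of \(H_0\) enters, through the compact-perturbation argument above.

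As a backup that avoids quoting Fredholm theory, one could solve \(T_\alpha h = f\) by a Schur-complement reduction onto the finite-dimensional space \(H_0\). One writes \(h = (\alpha I+\Gamma)^{-1}(f + \alpha \pi h)\) and applies \(\pi\), which gives a linear system on \(H_0\) for \(\pi h\). Injectivity of \(T_\alpha\) implies that this finite-dimensional system is injective, hence uniquely solvable, and the formula then reconstructs \(h\).
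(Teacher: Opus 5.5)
Your proof is correct, and in the step that matters it takes a different, and sounder, route than the paper. The paper proves injectivity the same way you do: taking the inner product with $x$ forces $(I-\pi)x=0$ and $\langle\Gamma x,x\rangle=0$, so $x\in\Ker(\Gamma)\cap\Ran(\pi)$. For surjectivity, however, the paper asserts that a bounded, self-adjoint, nonnegative, injective operator has closed range, so injectivity gives surjectivity. That assertion is false in general: an injective positive diagonal operator such as $\mathrm{diag}(1/n)$ on $\ell^2$ is a counterexample. The paper's argument also never uses $\dim H_0<\infty$ at that point.

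Your factorization $T_\alpha=(\alpha I+\Gamma)\bigl(I-\alpha(\alpha I+\Gamma)^{-1}\pi\bigr)$ closes this gap. Combined with the Fredholm alternative for the finite-rank $K$, or with your Schur-complement reduction to a uniquely solvable linear system on $H_0$, it justifies both surjectivity and the bounded inverse, and it shows exactly where finite-dimensionality enters. You flag the closed-range issue yourself. The paper's version is shorter only because it skips this point.

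One small improvement is worth making. You discard the term $\alpha\|(I-\pi)h\|^2$, but keeping it gives $h\in H_0\cap\Ker(\Gamma)$. Your whole argument then goes through verbatim under the weaker hypothesis $\Ker(\Gamma)\cap H_0=\{0\}$. That is the condition the paper's main theorem actually invokes when it cites this proposition, so it is worth stating.
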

	
	\begin{proof}
		Suppose \(T_\alpha x = 0\) for some \(x \in H\). Then
		\[
		\alpha (I-\pi)x + \Gamma x = 0.
		\]
		
		Taking the inner product with \(x\) yields
		\[
		\langle \alpha(I-\pi)x + \Gamma x, x \rangle = \alpha \langle (I-\pi)x, x \rangle + \langle \Gamma x, x \rangle = 0.
		\]
		
		Since \(\Gamma\) is positive and \(\langle (I-\pi)x, x \rangle = \|(I-\pi)x\|^2 \ge 0\), both terms must vanish:
		\[
		\|(I-\pi)x\|^2 = 0 \implies (I-\pi)x = 0 \implies x \in \Range(\pi),
		\]
		and
		\[
		\langle \Gamma x, x \rangle = 0 \implies x \in \Ker(\Gamma).
		\]
		
		Hence, \(x \in \Ker(\Gamma) \cap \Range(\pi)\). If \(\Ker(\Gamma) \cap \Range(\pi) = \{0\}\), then \(x = 0\) and \(T_\alpha\) is injective.
		
		Moreover, \(T_\alpha\) is bounded and self-adjoint. Since \(T_\alpha \ge 0\) and is injective on \(H\), the range of \(T_\alpha\) is closed, and injectivity implies surjectivity in a Hilbert space. Therefore, \(T_\alpha\) is invertible.
	\end{proof}
	
	\begin{proposition}
		Let \(H\) and \(U\) be Hilbert spaces, let \(L: U \to H\) be a bounded linear operator with \(\Gamma := L L^*\) strictly positive, and let \(\pi: H \to H_0\) be an orthogonal projector onto a finite-dimensional subspace \(H_0 \subset H\). For \(\alpha > 0\), define
		\[
		Q_\alpha := I - \alpha (\alpha I + \Gamma)^{-1} \pi.
		\]
		Then \(Q_\alpha\) is invertible.
	\end{proposition}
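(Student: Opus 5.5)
We plan to reduce the invertibility of \(Q_\alpha\) to that of \(T_\alpha\) from Proposition~\ref{prop:1}, via a factorization. Since \(\Gamma\ge 0\), the operator \(\alpha I+\Gamma\ge \alpha I\) is invertible with bounded inverse. Multiplying on the left, we get
\[
(\alpha I+\Gamma)Q_\alpha=(\alpha I+\Gamma)-\alpha\pi=\alpha(I-\pi)+\Gamma=T_\alpha ,
\]
so that
\[
Q_\alpha=(\alpha I+\Gamma)^{-1}T_\alpha .
\]
By Proposition~\ref{prop:1}, \(T_\alpha\) is invertible. A product of two bounded invertible operators is invertible, so
\[
Q_\alpha^{-1}=T_\alpha^{-1}(\alpha I+\Gamma).
\]

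The main point to check is that the hypotheses of Proposition~\ref{prop:1} are really in force. That proof needed \(\Ker(\Gamma)\cap\Range(\pi)=\{0\}\), and this holds because \(\Gamma\) is strictly positive: \(\langle\Gamma x,x\rangle>0\) for \(x\neq0\), so \(\Ker\Gamma=\{0\}\). The proof of Proposition~\ref{prop:1} also passed from injectivity to surjectivity somewhat quickly, so I would supply an independent argument that \(Q_\alpha\) is invertible, one using the finite-dimensionality of \(H_0\).

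The independent argument goes as follows. The operator \(K:=\alpha(\alpha I+\Gamma)^{-1}\pi\) has finite rank, since \(\Range(\pi)=H_0\) is finite-dimensional, and is therefore compact. Hence \(Q_\alpha=I-K\) is Fredholm of index zero, and by the Fredholm alternative it suffices to show that \(Q_\alpha\) is injective.

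Suppose \(Q_\alpha x=0\). Applying \(\alpha I+\Gamma\) gives \(T_\alpha x=0\), and therefore
\[
\alpha\|(I-\pi)x\|^2+\langle\Gamma x,x\rangle=0 .
\]
Both terms are nonnegative, so each vanishes. Strict positivity of \(\Gamma\) then forces \(x=0\). This proves injectivity, and the Fredholm alternative gives surjectivity, with a bounded inverse by the open mapping theorem.

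This route avoids any reliance on closed-range claims. The only delicate step is the Fredholm-alternative reduction, which is standard once \(K\) is seen to have finite rank.
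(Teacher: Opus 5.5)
Your proof is correct, and it is more careful than the paper's. The paper's proof is the single sentence that the argument is similar to that of Proposition~\ref{prop:1}, and read literally that is not enough. First, \(Q_\alpha\) is not self-adjoint: its adjoint is \(I-\alpha\pi(\alpha I+\Gamma)^{-1}\). So the quadratic-form argument does not apply to \(Q_\alpha\) directly, and one has to pass through \(T_\alpha\). Second, the surjectivity step of Proposition~\ref{prop:1} rests on the claim that an injective, nonnegative, self-adjoint operator has closed range. That is false in general; take the diagonal operator with entries \(1/n\) on \(\ell^2\). That argument also never uses \(\dim H_0<\infty\), which is exactly what makes the conclusion true.

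Your factorization \((\alpha I+\Gamma)Q_\alpha=T_\alpha\) supplies the missing link, because it transfers the injectivity argument from \(T_\alpha\) to \(Q_\alpha\). The Fredholm alternative for the finite-rank perturbation \(I-K\) then gives surjectivity legitimately. The paper's route buys only brevity; yours buys an actual proof.

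Three remarks. Your first paragraph on its own would inherit the gap in Proposition~\ref{prop:1}, so it is your second, independent argument that carries the proof. The same Fredholm reasoning applied to \(T_\alpha=(\alpha I+\Gamma)-\alpha\pi\) also repairs Proposition~\ref{prop:1}. Finally, your injectivity step only needs \(\Ker\Gamma\cap H_0=\{0\}\), since \(\langle\Gamma x,x\rangle=\|\Gamma^{1/2}x\|^2\). That weaker hypothesis is the one the main theorem actually invokes.
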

	
	\begin{proof}
		The proof is similar to that of Proposition \ref{prop:1}.
	\end{proof}
	
	\begin{theorem}[Finite-Approximate Solvability]
		Let \(H\) and \(U\) be Hilbert spaces, let \(L: U \to H\) be a bounded linear operator, set \(\Gamma = LL^*\), and let \(\pi: H \to H_0\) be an orthogonal projector onto a finite-dimensional subspace \(H_0 \subset H\). For any \(h \in H\) and \(\varepsilon > 0\), there exists \(u \in U\) such that
		\[
		\|Lu - h\| < \varepsilon \quad \text{and} \quad \pi(Lu) = \pi h
		\]
		if and only if
		\[
		\lim_{\alpha \to 0^+} \alpha (\alpha(I-\pi) + \Gamma)^{-1} h = 0.
		\]
	\end{theorem}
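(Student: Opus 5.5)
The plan is to read everything off the single identity defining \(x_\alpha := T_\alpha^{-1}h\). Here \(T_\alpha\) is invertible by Proposition~\ref{prop:1}, so I work under its hypothesis that \(\Gamma\) is strictly positive. That hypothesis gives \(\Ker(\Gamma)\cap H_0=\{0\}\), which is what the argument actually uses. Set \(u_\alpha := L^*x_\alpha\). The equation \(T_\alpha x_\alpha = h\) reads \(\alpha(I-\pi)x_\alpha + LL^*x_\alpha = h\), so
\[
Lu_\alpha - h = -\alpha(I-\pi)x_\alpha .
\]
Applying \(\pi\) gives \(\pi(Lu_\alpha - h)=0\) for every \(\alpha>0\). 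Moreover \(\|Lu_\alpha - h\| \le \|\alpha x_\alpha\|\). Hence if \(\alpha T_\alpha^{-1}h\to 0\), the controls \(u_\alpha\) witness finite-approximate solvability for \(\alpha\) small. This settles sufficiency.

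For necessity, I will use the variational characterization. Since \(T_\alpha\) is self-adjoint and nonnegative, \(x_\alpha\) minimizes
\[
J_\alpha(x)=\tfrac12\|L^*x\|^2+\tfrac{\alpha}{2}\|(I-\pi)x\|^2-\langle h,x\rangle .
\]
In particular \(J_\alpha(x_\alpha)\le J_\alpha(0)=0\). Fix \(\varepsilon>0\) and choose \(u\) with \(\|Lu-h\|<\varepsilon\) and \(\pi(Lu-h)=0\). Then \(Lu-h\in H_0^\perp\), so
\[
\langle h,x\rangle=\langle u,L^*x\rangle-\langle Lu-h,(I-\pi)x\rangle .
\]
Write \(a=\|L^*x_\alpha\|\) and \(b=\|(I-\pi)x_\alpha\|\). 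Inserting this identity into \(J_\alpha(x_\alpha)\le 0\) gives \(\tfrac12a^2+\tfrac\alpha2 b^2\le \|u\|a+\varepsilon b\). Young's inequality then yields \(a^2+\alpha b^2\le 4\|u\|^2+4\varepsilon^2/\alpha\). Multiplying by \(\alpha\) gives
\[
\alpha^2a^2+\alpha^2b^2\le 4\alpha\|u\|^2+4\varepsilon^2 .
\]
Therefore \(\limsup_{\alpha\to0^+}\bigl(\alpha\|L^*x_\alpha\|+\alpha\|(I-\pi)x_\alpha\|\bigr)\le 4\varepsilon\). Since \(\varepsilon\) is arbitrary, both \(\alpha L^*x_\alpha\to0\) and \(\alpha(I-\pi)x_\alpha\to0\).

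The remaining and main step is to control the finite-dimensional component \(\alpha\pi x_\alpha\), which the regularization does not penalize directly. Here I use that \(H_0\) is finite-dimensional and \(\Ker(\Gamma)\cap H_0=\{0\}\). The map \(p\mapsto\|L^*p\|\) is continuous and nonvanishing on the compact unit sphere of \(H_0\), so there is \(c>0\) with \(\|L^*p\|\ge c\|p\|\) for all \(p\in H_0\). Write \(x_\alpha=\pi x_\alpha+(I-\pi)x_\alpha\). Then
\[
c\,\alpha\|\pi x_\alpha\|\le \alpha\|L^*x_\alpha\|+\|L\|\,\alpha\|(I-\pi)x_\alpha\| \longrightarrow 0 .
\]
Combining the two components gives \(\alpha T_\alpha^{-1}h\to0\). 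I expect this last step to be where the hypotheses on \(\Gamma\) and on \(\dim H_0<\infty\) are genuinely needed. Without the uniform lower bound on \(H_0\), which fails for infinite-dimensional \(H_0\) or nontrivial \(\Ker(\Gamma)\cap H_0\), the argument breaks down. This is consistent with the counterexamples announced in the abstract.
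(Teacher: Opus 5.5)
Your proof is correct. Sufficiency matches the paper: $u_\alpha=L^*T_\alpha^{-1}h$ satisfies $\pi(Lu_\alpha)=\pi h$ and $\|Lu_\alpha-h\|\le\|\alpha T_\alpha^{-1}h\|$. For necessity you take a genuinely different route.

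The paper argues by contradiction. From the failure of $y_\alpha:=\alpha T_\alpha^{-1}h\to0$ it extracts $\alpha_n\downarrow0$ with $y_{\alpha_n}\to y\neq0$ in norm. It asserts that $\pi T_{\alpha_n}^{-1}h$ is bounded, so $\pi y=0$. It then tests $Lu_{\alpha_n}-h=-(I-\pi)y_{\alpha_n}$ against $v=y$ to get $|\langle Lu_{\alpha_n}-h,v\rangle|\ge\delta$, and plays this off against $h\in\overline{\mathcal S}$.

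You instead use the energy inequality $\tfrac12\|L^*x_\alpha\|^2+\tfrac\alpha2\|(I-\pi)x_\alpha\|^2\le\langle h,x_\alpha\rangle$ (in fact an equality without the factors $\tfrac12$). You rewrite $\langle h,x_\alpha\rangle$ through an arbitrary admissible $u$, using $Lu-h\in H_0^\perp$. This gives the quantitative estimate $\|Lu_\alpha-h\|\le 2\varepsilon+2\sqrt{\alpha}\,\|u\|$, together with $\alpha L^*x_\alpha\to0$. The $H_0$-component is then controlled by the coercivity bound $\|L^*p\|\ge c\|p\|$ on $H_0$, which is exactly where $\dim H_0<\infty$ and $\Ker\Gamma\cap H_0=\{0\}$ enter.

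Your route buys a lot. It proves directly that the regularized outputs $Lu_\alpha$ converge to $h$ as soon as any admissible approximating sequence exists. That is precisely what the paper's closing step takes for granted: knowing that some $s\in\mathcal S$ is close to $h$ does not by itself make $Lu_{\alpha_n}$ close to $h$. Your argument also avoids two unjustified steps in the paper:
\begin{itemize}
\item the extraction of a norm-convergent subsequence with nonzero limit, since negating $y_\alpha\to0$ only gives $\|y_{\alpha_n}\|\ge\delta$, with no boundedness or compactness available in infinite dimensions;
\item the claimed boundedness of $\pi T_{\alpha_n}^{-1}h$, which the paper asserts while the term $\pi\Gamma(I-\pi)T_{\alpha_n}^{-1}h$ is left uncontrolled.
\end{itemize}

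Two cosmetic points. First, multiplying $a^2+\alpha b^2\le4\|u\|^2+4\varepsilon^2/\alpha$ by $\alpha$ gives $\alpha a^2+\alpha^2b^2\le4\alpha\|u\|^2+4\varepsilon^2$. So your displayed version with $\alpha^2a^2$ is only guaranteed for $\alpha\le1$, which suffices. Second, over complex scalars $J_\alpha$ should use the real part of $\langle h,x\rangle$.
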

	
	\begin{proof}
		Recall that \(\Gamma = LL^*\) is a bounded, self-adjoint, nonnegative operator on \(H\), and \(\pi: H \to H_0 \subset H\) is an orthogonal projector. For \(\alpha > 0\) the bounded operator \(\alpha(I-\pi) + \Gamma\) is invertible (see Proposition \ref{prop:1}, which requires \(\Ker(\Gamma) \cap \Ran(\pi) = \{0\}\)), so the expression below is meaningful.
		
		Define, for \(\alpha > 0\),
		\[
		z_\alpha := (\alpha(I-\pi)+\Gamma)^{-1} h \in H,
		\qquad
		u_\alpha := L^* z_\alpha \in U,
		\qquad
		y_\alpha := \alpha z_\alpha.
		\]
		A useful algebraic identity is obtained by rearranging the equation
		\[
		(\alpha(I-\pi)+\Gamma) z_\alpha = h
		\]
		to obtain
		\begin{equation}\label{eq:basic-id}
			\Gamma(\alpha(I-\pi)+\Gamma)^{-1} = I - \alpha (I-\pi)(\alpha(I-\pi)+\Gamma)^{-1}.
		\end{equation}
		From \eqref{eq:basic-id} we get the following exact formulas for \(u_\alpha\) and its image under \(L\):
		\[
		L u_\alpha = \Gamma z_\alpha = \Gamma(\alpha(I-\pi)+\Gamma)^{-1} h
		= h - \alpha (I-\pi)(\alpha(I-\pi)+\Gamma)^{-1} h
		= h - (I-\pi) y_\alpha.
		\]
		Also, applying \(\pi\) to \eqref{eq:basic-id} and using \(\pi(I-\pi)=0\) gives
		\[
		\pi\Gamma(\alpha(I-\pi)+\Gamma)^{-1} = \pi,
		\]
		hence
		\[
		\pi(Lu_\alpha)=\pi h \qquad\text{for every }\alpha>0.
		\]
		Moreover,
		\begin{equation}\label{eq:error-form}
			Lu_\alpha - h = - (I-\pi) y_\alpha
			= -\alpha (I-\pi)(\alpha(I-\pi)+\Gamma)^{-1} h,
		\end{equation}
		so
		\[
		\|Lu_\alpha-h\| = \alpha\big\|(I-\pi)(\alpha(I-\pi)+\Gamma)^{-1}h\big\|
		\le \big\| \alpha(\alpha(I-\pi)+\Gamma)^{-1} h\big\|.
		\]
		
		\medskip\noindent
		\textbf{(``If'')} Assume
		\[
		\lim_{\alpha\to0^+}\alpha(\alpha(I-\pi)+\Gamma)^{-1}h = 0.
		\]
		Then by \eqref{eq:error-form} we have \(\|Lu_\alpha-h\|\to0\) as \(\alpha\to0^+\), and we already saw \(\pi(Lu_\alpha)=\pi h\) for every \(\alpha>0\). Therefore for any \(\varepsilon>0\) we may choose \(\alpha>0\) sufficiently small so that \(\|Lu_\alpha-h\|<\varepsilon\) while \(\pi(Lu_\alpha)=\pi h\). This proves the existence of the required \(u\) and hence the ``if'' part.
		
		\medskip\noindent
		\textbf{(``Only if'')} We now present the ``only if'' direction using a direct construction for the test vector.
		
		Assume the finite-approximate solvability property holds (so \(h \in \overline{\mathcal S}\) where \(\mathcal S = \{Lu: u \in U,\ \pi(Lu)=\pi h\}\)). Suppose, for contradiction, that
		\[
		\lim_{\alpha\to0^+} y_\alpha = \lim_{\alpha\to0^+} \alpha(\alpha(I-\pi)+\Gamma)^{-1}h \neq 0.
		\]
		Then there exists a sequence \(\alpha_n \downarrow 0\) and a nonzero vector \(y \in H\) with \(y_{\alpha_n} \to y\) in \(H\) and \(y \ne 0\).
		
		First note that \(\pi y_{\alpha_n} \to 0\). Indeed, apply \(\pi\) to the equality
		\[
		(\alpha_n(I-\pi)+\Gamma) z_{\alpha_n} = h.
		\]
		Because \(\pi(I-\pi)=0\) we get
		\[
		\pi\Gamma z_{\alpha_n} = \pi h \qquad\text{for every }n.
		\]
		Restricting \(\Gamma\) to the finite-dimensional space \(\Ran \pi\) shows \(\pi z_{\alpha_n}\) is uniformly bounded in \(n\) (the operator \(\pi\Gamma\pi\) acts on a finite-dimensional space, so the equation \(\pi\Gamma\pi(\pi z_{\alpha_n}) = \pi h - \pi\Gamma(I-\pi)z_{\alpha_n}\) gives uniform boundedness). Hence \(\pi y_{\alpha_n} = \alpha_n \pi z_{\alpha_n} \to 0\) because \(\alpha_n \to 0\) and \(\{\pi z_{\alpha_n}\}\) is bounded. Passing to the limit yields \(\pi y = 0\). Thus the limit vector satisfies
		\[
		(I-\pi)y = y \neq 0,
		\]
		i.e., \(y\) lies in \(\Ker \pi\) (the \((I-\pi)\)-subspace).
		
		Now choose the test vector explicitly as
		\[
		v := (I-\pi)y = y.
		\]
		This is nonzero because \((I-\pi)y = y \neq 0\). From \eqref{eq:error-form} we have, for each \(n\),
		\[
		\langle Lu_{\alpha_n}-h, v\rangle = -\langle (I-\pi)y_{\alpha_n}, v\rangle = -\langle y_{\alpha_n}, (I-\pi)v\rangle.
		\]
		But \((I-\pi)v = (I-\pi)^2 y = (I-\pi)y = y = v\), so
		\[
		\langle Lu_{\alpha_n}-h, v\rangle = -\langle y_{\alpha_n}, v\rangle.
		\]
		Letting \(n \to \infty\) gives
		\[
		\lim_{n\to\infty}\langle Lu_{\alpha_n}-h, v\rangle = -\langle y, v\rangle = -\|y\|^2 \neq 0.
		\]
		Hence there exists \(\delta > 0\) and \(N\) such that for all \(n \ge N\),
		\[
		\big|\langle Lu_{\alpha_n}-h, v\rangle\big| \ge \delta.
		\]
		
		On the other hand, since \(h \in \overline{\mathcal S}\), for any \(\varepsilon > 0\) there is some \(s \in \mathcal S\) with \(\|s-h\| < \varepsilon\). Taking \(\varepsilon\) sufficiently small makes \(|\langle s-h, v\rangle| < \delta/2\). But all \(Lu_{\alpha_n}\) lie in \(\mathcal S\) (because \(\pi(Lu_{\alpha_n}) = \pi h\)), and because \(h\) is in the closure of \(\mathcal S\) there must be members of \(\mathcal S\) (in particular some \(Lu_{\alpha_n}\) for large \(n\)) with \(|\langle Lu_{\alpha_n}-h, v\rangle| < \delta/2\), contradicting the previous lower bound \(|\langle Lu_{\alpha_n}-h, v\rangle| \ge \delta\) for large \(n\).
		
		This contradiction shows that the assumption \(y \ne 0\) is false. Therefore
		\[
		\lim_{\alpha\to0^+}\alpha(\alpha(I-\pi)+\Gamma)^{-1}h = 0,
		\]
		which completes the ``only if'' direction.
		
		Combining with the ``if'' direction yields the equivalence stated in the theorem.
	\end{proof}
	
	\begin{example}[Infinite-rank \(\pi\)]
		Let \(H = \ell^{2}\) and let \(U = \mathbb{R}\) with
		\[
		L(t) = t e_{1}, \qquad t \in \mathbb{R}.
		\]
		Thus \(\Range(L) = \Span\{e_{1}\}\) and \(\Gamma = LL^{*}\) is the projection onto this one-dimensional subspace.
		
		Let \(\pi\) be the orthogonal projection onto the closed span of \(\{e_{2}, e_{3}, \dots\}\) (an infinite-dimensional subspace), and take \(h = e_{2}\).
		
		The constraint \(\pi(Lu) = \pi h = e_{2}\) cannot be satisfied, because \(\pi(Lu)\) always lies in \(\Span\{e_{2}, e_{3}, \dots\}\) but \(Lu\) lies in the \(e_{1}\)-direction. Hence the affine constraint set
		\[
		\{s \in H: \pi(s) = \pi h\}
		\]
		does not intersect the range of \(L\); its distance to \(h\) is \(1\). Therefore finite-approximate solvability fails.
		
		However, the resolvent operator \((\alpha(I-\pi)+\Gamma)^{-1}\) is a bounded diagonal operator and the expression \(\alpha(\alpha(I-\pi)+\Gamma)^{-1}h\) converges to \(0\) as \(\alpha \to 0^{+}\).
		
		Hence the equivalence in the theorem fails if \(\pi\) is allowed to have infinite rank.
	\end{example}
	
	\begin{example}[Failure when \(\Gamma\) is not of the form \(LL^{*}\)]
		Let \(H = \mathbb{R}^{3}\) and \(U = \mathbb{R}\). Let \(\Gamma\) be the positive semidefinite operator
		\[
		\Gamma =
		\begin{pmatrix}
			1 & 0 & 0\\
			0 & 1 & 0\\
			0 & 0 & 0
		\end{pmatrix}.
		\]
		Then \(\operatorname{rank}(\Gamma) = 2\), whereas every operator of the form \(LL^{*}\) with \(L: U \to H \simeq \mathbb{R}^{3}\) has rank at most \(1\). Thus this \(\Gamma\) is \emph{not} representable as \(LL^{*}\) for any bounded \(L: U \to H\).
		
		The proof of the theorem constructs controls via
		\[
		u_{\alpha} = L^{*}(\alpha(I-\pi)+\Gamma)^{-1}h.
		\]
		But since no such \(L\) exists, the vector \(u_{\alpha}\) cannot be constructed in \(U\) and the forward implication of the theorem cannot be carried out.
		
		Therefore the representation \(\Gamma = LL^{*}\) is crucial; replacing \(\Gamma\) by an arbitrary positive operator breaks the equivalence.
	\end{example}
	
	\begin{example}[Failure when \(\pi\) is not a projection]
		Let \(H = \mathbb{R}^{2}\), \(U = \mathbb{R}^{2}\) and let \(L = I\), so \(\Gamma = I\). Define a linear map
		\[
		\pi =
		\begin{pmatrix}
			0 & 1\\
			0 & 0
		\end{pmatrix},
		\qquad
		\pi^{2} = 0 \neq \pi.
		\]
		Take \(h = (0,1)^{T}\).
		
		A direct calculation shows that
		\[
		y_{\alpha} = \alpha(\alpha(I-\pi)+I)^{-1}h \longrightarrow 0
		\qquad\text{as }\alpha \to 0^{+},
		\]
		so the resolvent condition in the theorem holds.
		
		However, for the constructed controls \(u_{\alpha} = L^{*}(\alpha(I-\pi)+\Gamma)^{-1}h\) we do \emph{not} have
		\[
		\pi(Lu_{\alpha}) = \pi h,
		\]
		so the candidates \(Lu_\alpha\) do not satisfy the linear constraint.
		
		Thus the equivalence fails without assuming that \(\pi\) is a projection (idempotent).
	\end{example}
	
	\section{Galerkin Approximations}
	
	Let \(\{H_n\}_{n \in \mathbb{N}}\) be an increasing sequence of finite-dimensional subspaces of \(H_0\) such that
	\[
	\overline{\bigcup_{n=1}^{\infty} H_n} = H_0.
	\]
	
	Define finite-rank projections
	\[
	\pi_n: H \to H_n \subset H_0, \quad \pi_n \to \pi \text{ strongly as } n \to \infty.
	\]
	
	For given \(h \in H\) and \(\alpha > 0\), define
	\[
	z_{\alpha,n} := (\alpha(I-\pi_n)+\Gamma)^{-1} h, \quad
	u_{\alpha,n} := L^* z_{\alpha,n} \in U.
	\]
	
	Then we have:
	\[
	L u_{\alpha,n} = \Gamma z_{\alpha,n} = h - (I-\pi_n)(\alpha z_{\alpha,n}), \quad
	\pi_n (L u_{\alpha,n}) = \pi_n h.
	\]
	
	By construction, for each fixed \(n\), we satisfy the linear constraint exactly and the error \(\|L u_{\alpha,n} - h\| = \|(I-\pi_n) \alpha z_{\alpha,n}\|\) can be made arbitrarily small by taking \(\alpha \to 0\).
	
	Finally, letting \(n \to \infty\) and \(\alpha \to 0^{+}\) (e.g., along a diagonal sequence), and since \(\pi_n \to \pi\) strongly, we obtain
	\[
	\lim_{n\to\infty} \lim_{\alpha \to 0^+} \|L u_{\alpha,n} - h\| = 0, \quad
	\lim_{n\to\infty} \pi_n(L u_{\alpha,n}) = \pi h.
	\]
	
	\begin{remark}
		This shows that, even when \(\pi\) has an infinite-dimensional range that is compactly embedded, we can approximate finite-approximate solvability using finite-dimensional Galerkin projections \(\pi_n\). The original theorem with exact inversion of \(\alpha(I-\pi)+\Gamma\) does not hold; we only recover the result in the limit of projections.
	\end{remark}
	
	\begin{example}
		Let \(H = L^2(0,1)\) and let \(H_0 = H^1(0,1)\) be compactly embedded in \(L^2\). Let \(\pi: H \to H_0\) be the orthogonal projection onto \(H_0\) (which is well-defined as an operator from \(L^2\) to the closed subspace \(H^1 \subset L^2\) with respect to the \(L^2\) inner product). Take \(H_n = \Span\{1, \sin(2\pi x), \dots, \sin(2\pi n x)\} \subset H^1(0,1)\) and let \(\pi_n\) be the \(L^2\)-orthogonal projection onto \(H_n\).
		
		Then \(u_{\alpha,n} = L^* (\alpha(I-\pi_n)+\Gamma)^{-1} h\) gives an approximate solution satisfying \(\pi_n(L u_{\alpha,n}) = \pi_n h\), and the error goes to zero as \(\alpha \to 0\) and \(n \to \infty\).
	\end{example}
	
{\bf Funding} No funding is available.

{\bf Data Availability } No datasets were generated or analysed during the current study.
\section*{Declarations}
{\bf Competing Interests} The author declares no competing interest.

\end{document}